\documentclass[11pt,draftcls,onecolumn]{IEEEtran}  
\usepackage{amssymb}
\usepackage{amsmath}
\usepackage{amsthm}
\usepackage{mathrsfs}
\usepackage{ucs}
\usepackage{subfigure}
\usepackage{multirow}
\usepackage{graphicx}
\usepackage{yhmath}
\usepackage[]{backref}
\newtheorem{sat}{{\sc Theorem}}[section]
\newtheorem{pro}[sat]{{\sc Proposition}}
\newtheorem{lem}[sat]{{\sc Lemma} }

\newtheorem{defi}[sat]{Definition}

\begin{document}
\title{Convergence of $p$-Stable  Random  Fractional Wavelet Series and Some of its Properties}
\author{Juan M. Medina,~Fernando R. Dobarro~and~Bruno~Cer\hspace{1pt}nuschi-Fr{\'\i}as
\thanks{This work was funded by the Universidad de Buenos Aires, Grant. No. 20020170100266BA, CONICET and CONAE, under Project No. 5 of the \emph{Anuncio de Oportunidad para el desarrollo de aplicaciones y puesta apunto de metodolog\'{\i}as para el \'{a}rea oceanogr\'{a}fica utlizando im\'{a}genes SAR}, Buenos Aires, Argentina.}
\thanks{J. M. Medina and B. Cernuschi-Fr\'{\i}as are with the
Universidad de Buenos Aires, Facultad de Ingenier\'{\i}a, and the Inst. Argentino  de Matem\'atica "A. P. Calder\'on", IAM, CONICET, Buenos Aires, Argentina.}
\thanks{F. R. Dobarro is with the
Universidad Nacional de Tierra del Fuego, Ant\'artida e Islas del Atl\'antico Sur, Instituto de Desarrollo Econ\'omico e Innovaci\'on, Ushuaia, Tierra del Fuego, Ant\'artida e Islas del Atl\'antico Sur, Argentina.}
}

\maketitle

\begin{abstract}
 For appropriate orthonormal wavelet basis $\{\psi_{j\,k}^e \}_{j\in\mathbb{Z}\,k\in\mathbb{Z}^d\,e\in\{0,1\}^d}$, constants $p$ and $\gamma$, if $\mathcal{I}_{\gamma}$ denotes the Riesz fractional integral operator of order $\gamma$ and $(\eta_{j\,k\,e})_{j\in\mathbb{Z} k\in\mathbb{Z}^d \,e\in\{0,1\}^d}$ a sequence of independent identically distributed symmetric $p$-stable random variables, we investigate the convergence of the series $\sum\limits_{j\,k\,e} \eta_{j\,k\,e} \mathcal{I}_{\gamma} \psi_{j\,k\,}^e$. Similar results are also studied for modified fractional integral operators. Finally, some geometric properties related to self similarity are studied.
\end{abstract}

\begin{IEEEkeywords}
Fractional Processes, Wavelets.
\end{IEEEkeywords}

\section{Introduction}
Uncoupled representations of random processes are of practical interest.
A classical example for Gaussian processes  is the Karhunen-L\'{o}eve (KL) representation. Motivated in part by  applications in signal and image processing \cite{Cohen,Tafti1,Unser,Van}, a usual requirement for a random process defined on $\mathbb{R}^d$ is to be  \emph{self similar} (see section \ref{secsta}) in some specified sense, since there exists several related notions in the literature.  This property, in the case $d=2$, is of certain relevance for characterizing textures. For the finite variance case, several KL like representations for the family of $\dfrac{1}{f}$ of self-similar  and related processes
were proposed, e.g. \cite{Cohen,Flan,art2,Unser} among others.
In this case, these representations have in general  the form:
\begin{equation}\label{ssf} X_{\gamma}=\sum\limits_{I} \eta_I  \mathcal{I}_\gamma \psi_I\,,
\end{equation}
where
$\mathcal{I}_\gamma$
is some fractional integration operator, $\{\psi_I\}_I$ is an orthonormal basis of $L^2(\mathbb{R}^d)$ or other Hilbert space of functions and the $\{\eta_I\}_I$ is a sequence of finite variance identically distributed random variables, in most cases Gaussian. The parameter $\gamma$ is usually linearly related to the self-similarity \emph{Hurst} parameter $H$ of the process, \cite{Fal}. Apart from applications, series like \eqref{ssf} and its geometric properties were extensively studied in the case of Fourier Gaussian random series, see for example \cite{Kah}.  Considering this sum as a \emph{generalized random process} in the sense of Gelfand and Vilenkin \cite{Gel}, Chapter 3, p. 237,
if
the $\eta_I$'s are Gaussian and $\mathcal{I}_{\gamma}$ is the Riesz fractional integration operator (Definition \ref{Riesz}) then this sum converges a.s. in the sense of distributions, i.e. in $ \mathcal{D}^{\prime}(\mathbb{R}^d)$  to a self-similar process as defined here in Section \ref{secsta} in terms of equality in probability law between $X_{\gamma}$ and a re-scaled version of it: $a^{\delta}X_{\gamma}(a\,.\,)$ for some $\delta\in\mathbb{R}$. In this particular case, $X_{\gamma}$ is a fractional Gaussian noise (See Theorem \ref{T2}). These type of representations have received some interest because of  its simplicity for modeling certain random signals (see e.g. \cite{Unser}), since one only needs to know the probability distribution of the coefficients $\eta_I$ and the parameter $\gamma$ or similar. On the other hand, the finite variance requirement may be a constraint in some applications. A first attempt to overcome this limitation, retaining at the same time some of the properties of interest of $X_{\gamma}$, is to substitute the $\eta_I$'s with non Gaussian $p$-stable random variables, $p\in(0,2)$, \cite{Sam}.
However, it may become a non trivial task to check which properties are preserved for this case. For example, besides self similarity, in \cite{Pip} is proved that it is not possible to represent a $p$-stable stationary random process by a series like
\eqref{ssf}.

Here, we prove that for appropriate parameters $\gamma \leq\dfrac{d}{2}$ and $p$, if we consider $\{\psi_I\}_I$ a suitable wavelet basis, the series \eqref{ssf} stills converges a.s.  in $ \mathcal{D}^{\prime}(\mathbb{R}^d)$, and if we change $\mathcal{I}_{\gamma}$ by a modified operator, then it converges to an ordinary process for the case $\dfrac{d}{2}<\gamma \leq \dfrac{d}{2}+1$. If $p=2$ the limit of the series \eqref{ssf} is self similar of parameter $\dfrac{d}{2}+\gamma$,  and in the case $p\neq 2$, although its limit is not necessarily self similar, we can prove that the distribution function of  the
re-scaled
process $a^{\frac{d}{2}+\gamma}X_{\gamma}(a\,.\,)$ is, in some sense, properly stochastically dominated. In the Gaussian case of $p=2$, the series of equation \eqref{ssf} converges to a fractional Gaussian noise, for which  an integrated version of it gives the well known fractional Brownian motion, and its $d$-dimensional analogues, with their known ``fractal'' properties.  We shall see that, for appropriate parameters $p$ and $\gamma$, that integrated versions of the process $X_{\gamma}$  have a graph with  Hausdorff dimension greater than $d$, justifying   the possible use of the process defined by \eqref{ssf} as a model of a \emph{fractal} process still for $p\neq 2$.

\section{Auxiliary results and definitions.}\label{secaux}

\subsection{Function spaces, Fourier transforms and Wavelets.}\label{auxfunc}

In the following, if $p\in[1,\infty]$ and  $\mu$  is the Borel measure over $\mathbb{R}^d$,  the corresponding Lebesgue  spaces of the equivalence classes of functions will be denoted by $L^p (\mathbb{R}^d, d\mu)$,
and if $\mu$ is the usual Lebesgue measure, we will write shortly $L^p (\mathbb{R}^d)$. When $p=2$ it becomes a Hilbert space  and the  $L^2(\mathbb{R}^d)$ inner product will be denoted by $\langle{\,.\,,.\,}\rangle$. If $x\in \mathbb{C}^d \;  (d\in\mathbb{N})$ we will denote its  usual norm by $\left|x\right|$ and the support of a function $f$ is defined by $supp(f)=\overline{\left\{x\,:f(x)\neq 0\right\}}$.
The Schwartz class of functions $\mathcal{S}(\mathbb{R}^d)$ is defined as the linear space of smooth functions rapidly decreasing at infinity, together with its derivatives. This means that $\phi \in \mathcal{S}(\mathbb{R}^d)$ whenever
$
 \phi  \in C^\infty  \left( {\mathbb{R}^d } \right)$ and
 $$
 \mathop {\sup }\limits_{(x_1 ,...x_d ) \in \mathbb{R}^d }
\prod\limits_{i = 1}^d {\left| {x_i } \right|^{\alpha _i } }
\left| {\frac{\partial } {{\partial x_1^{\beta _1 }
}}...\frac{\partial } {{\partial x_d^{\beta _d } }}\phi (x_1
,...x_d )} \right|< \infty\; \forall \; \alpha _j \, \beta _j
\,  \in \, \mathbb{N}\,,
$$
endowed with its usual topology.
We will denote $\mathcal{D}(\mathbb{R}^d)$ the space of functions which are in $C^\infty  \left( {\mathbb{R}^d } \right)$ and have compact support. Both spaces are topological vector spaces, for more details see \cite{Gra}, Chapter 2, p. 109. Their duals are denoted as: $\mathcal{S}^{\prime}(\mathbb{R}^d)$ (\textit{Tempered distributions}) and $\mathcal{D}^{\prime}(\mathbb{R}^d)$ (\textit{distributions}) respectively. Clearly:
$\mathcal{D}(\mathbb{R}^d)\subset \mathcal{S}(\mathbb{R}^d)$ and then $\mathcal{S}^{\prime}(\mathbb{R}^d)\subset \mathcal{D}^{\prime}(\mathbb{R}^d)$.
The \textit{Fourier Transform} $\widehat{f}$ of $f\in \mathcal{S}(\mathbb{R}^d)$ is defined as
$
\widehat{f} \left( \lambda  \right) = \int\limits_{\mathbb{R}^d } {f \left( x \right)} e^{ - 2\pi i\lambda .x} dx\,.
$
It is a known fact that $\widehat{f}$ also belongs to the space  $\mathcal{S}(\mathbb{R}^d)$.  The Fourier transform can be defined, as usual as a linear map over $L^1 (\mathbb{R}^d)$, 
 as an isometry on $L^2(\mathbb{R}^d)$ or  over the class of tempered distributions. The inverse Fourier transform $\mathop{f}\limits^{\vee}$ is defined in an analogous way. For further references on Fourier transforms and series, see for example  \cite{Gra}.

Below, we will need a variant of the classical Shannon, Nyquist and Kotelnikov sampling theorem.
%
 \hfill\\
\begin{sat}\label{t-1} If
$ f \in L^2 (\mathbb{R}^d ) $  is such that
$supp (f) \subset [ - x_o , x_o ]^d$
 with
$
x_o  < \dfrac{1}{2}
$. Then there exists $ \phi  \in \mathcal{S}(\mathbb{R}^d ) $
such that

\begin{equation}\label{shan}
\widehat{f}(\lambda ) = \sum\limits_{k \in\mathbb{Z}^d }
{\widehat{f}(k)} \phi (\lambda  - k)
\end{equation}
\end{sat}

\begin{proof} Let
$ \widetilde{f}(x) = \sum\limits_{k \in \mathbb{Z}^d } {f(x + k)}
$
 be the periodization of $f$. Then, $\widetilde{f}$  verifies
$$
\widetilde{f} \in L^2 \left(\left[-\dfrac{1}{2},\dfrac{1}{2}\right]^d \right) \subset L^1 \left(\left[-\dfrac{1}{2},\dfrac{1}{2}\right]^d \right)
$$ and therefore
$\widetilde{f} $ has Fourier series given by  $$\sum\limits_{k \in \mathbb{Z}^d } {a_k e^{ -
2\pi ix.k} },$$ and then
$
\mathop {\lim }\limits_{R  \to \infty } \sum\limits_{k \in D_R  } {a_k e^{ - 2\pi ix.k} }  =  \widetilde{f} $ a.e. and in $
L^1 \left(\left[-\dfrac{1}{2},\dfrac{1}{2}\right]^d \right)
$  (and in $L^2$) norm for a suitable  domain $
{D_R  } \in \mathbb{R}^d $.
%
%
Next, we can take $\phi  \in \mathcal{S}(\mathbb{R}^d )$ such that
$$
\mathop{\phi}\limits^{\vee}
(x ) = \left\{ \begin{array}{l}
  1,\left| {x _i } \right| < x_0 \\
  0,\left| {x _i } \right| \geq 1-x_0 \\
 \end{array} \right.\,.
$$
 Defining  $
S_R  (x) =\mathop{\phi}\limits^{\vee}(x)\left({\sum\limits_{k \in D_R
} a_k e^{ - 2\pi ix.k} }\right)
$,
 then
$
f = \widetilde{f}\;\mathop{\phi}\limits^{\vee}
$ and $\mathop {\lim }\limits_{R  \to \infty } \left\| {S_R   - f} \right\|_{L^1 (\mathbb{R}^d )}  = 0$.
This implies
$$
\mathop {\lim }\limits_{R  \to \infty } \mathop {\rm{sup}}\limits_{\lambda  \in \mathbb{R}^d } \left| {\widehat{S_R  }(\lambda ) - \widehat{f}(\lambda )} \right| =
0\,,
$$
%
%
but (see e.g. \cite{Gra}, Exercise 3.6.4, p.236)
$
a_k  = \widehat{f}(k)
$, so that
$$
\widehat{S_R  }(\lambda ) = \sum\limits_{k \in D_R  }
{\widehat{f}(k)} \phi (\lambda  - k)\;.
$$
Then \eqref{shan} follows immediately from this.
\end{proof}
In the following we will use fractional integral operators, for which some of their properties are reviewed.
We begin with a definition (\cite{Gra2}, Chapter 6, p. 2 or \cite{Stein}, Chapter 5, p. 117):
\begin{defi} Let $0<\alpha<d$.
For $ f \in \mathcal{S}(\mathbb{R}^d )$ we define its
Riesz Potential:

\begin{equation}\label{Riesz}
(\mathcal{I}_{\gamma} f)(x) = \frac{1} {{C_\gamma }} \int\limits_{\mathbb{R}^d }
\frac {f(y)} {\left| {x - y} \right|^{d -\gamma }} \,  dy
\end{equation}
where
$ C_\gamma  = \dfrac{{\pi ^{d/2} \, 2^\alpha \,
\Gamma \left( {\dfrac{\gamma } {2}} \right)}} {{\Gamma \left(
{\dfrac{d} {2} - \dfrac{\gamma } {2}} \right)}} $.
\end{defi}
Riesz potentials have the following scaling property: for every $a\neq 0$: $\mathcal{I}_{\gamma}(f(a\,.\,))=|a|^{-\gamma} (\mathcal{I}_{\gamma}f)(a\,.\,)$,
i.e.
$\mathcal{I}_{\gamma}(f(a\,y\,))(x)=|a|^{-\gamma} (\mathcal{I}_{\gamma}f(y))(a\,x\,)$.
A crucial result for this integral operator is the following, \cite{Gra2}, Chapter 6, p.3 :

\begin{sat}\label{t0}{(Hardy, Littlewood and Sobolev)} Let $
0 < \gamma  < d$, $1\leq p < q < \infty$ and $
\dfrac{1} {q} = \dfrac{1} {p} - \dfrac{\gamma } {d} $ then: \hfill\\
(a) For all $ f \in  L^p (\mathbb{R}^d ) $, the integral that
defines
$  \mathcal{I}_{\gamma} f$ converges a.e. \hfill\\
(b)If $ p > 1$ then
\begin{equation}
\left\| { \mathcal{I}_{\gamma} f} \right\|_{L^q (\mathbb{R}^d)}
\leq C_{pq} \left\| f \right\|_{L^p (\mathbb{R}^d)} \,.
\end{equation}
\end{sat}
Note that, in the appropriate sense, the Fourier Transform of $ \mathcal{I}_{\gamma}f$ is given by:
\begin{equation}\label{fourfrac}\widehat{\mathcal{I}_{\gamma}f} (\lambda)= (2\pi )^{ - \gamma
} \left| \lambda \right|^{ - \gamma } \widehat{f}(\lambda)
\end{equation}
and it is easy to check that for $f
\in\mathcal{S}(\mathbb{R}^d )$ and $\alpha+\beta<d$ then $ \mathcal{I}_{\alpha}(\mathcal{I}_{\beta} f) = \mathcal{I}_{\alpha  +
    \beta} (f)$. Furthermore, if $\rm \Delta \it f = \sum\limits_{j = 1}^d {\dfrac{{\partial ^2 f}}
{{\partial x_j^2 }}}$ is the Laplacian of $f$ , then
  $\Delta ( \mathcal{I}_{\gamma} f) =  \mathcal{I}_{\gamma-2} f$.
Finally, $\mathcal{I}_{\gamma}$ can be thought as defined by the convolution with the locally integrable function $k_{\gamma}(x)=\dfrac {1} {C_\gamma} \dfrac{1}{\left| {x} \right|^{  d - \gamma }}$, and is formally self adjoint, in the sense that for every $f,g\,\in\,\mathcal{S}(\mathbb{R}^d)$:
\begin{equation}\label{selfadj}
\langle{\mathcal{I}_{\gamma}f,g}\rangle=\langle{f,\mathcal{I}_{\gamma}g}\rangle\,.
\end{equation}
  Considering again $k_{\gamma}$, we can define a fractional integral operator for $f\in L^p (\mathbb{R}^d)$,  in the following way:
$$\mathcal{K}_{\gamma}f(x)=\int\limits_{\mathbb{R}^d} (k_{\gamma}(x-y)-k_{\gamma} (y))f(y) dy=\int\limits_{\mathbb{R}^d} K_{\gamma}(x,y)\, f(y) dy$$
The modified kernel $K_{\gamma} (x,y)=k_{\gamma}(x-y)-k_{\gamma} (y)$ is easier to control, and we sketch the proof of
the following lemma:
\begin{lem}\label{ker} If $1<p<\infty $ and $0< d\left({1-\dfrac{1}{p}}\right)<\gamma<d\left({1-\dfrac{1}{p}}\right)+1$, then
$K_{\gamma}(x,\,.\,)\in L^p(\mathbb{R}^d)$ and moreover:\hfill\\
(i) There exists a positive constant $C_{p\,\gamma\,d}$ such that for each $x\in\mathbb{R}^d$:
$$\left\|{K_{\gamma}(x,\,.\,)}\right\|_{L^p(\mathbb{R}^d)} = C_{p\,\gamma\,d}\, |x|^{\gamma-\left({1-\frac{1}{p}}\right)d}\,.$$
(ii) For every $x,x'\in\mathbb{R}^d$: $\left\|{K_{\gamma}(x,\,.\,)-K_{\gamma}(x',\,.\,)}\right\|_{L^p(\mathbb{R}^d)}=\left\|{K_{\gamma}(x-x',\,.\,)}\right\|_{L^p(\mathbb{R}^d)}$.
\end{lem}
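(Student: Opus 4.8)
The plan is to exploit the structure $K_{\gamma}(x,y)=k_{\gamma}(x-y)-k_{\gamma}(y)$ together with the homogeneity and radial symmetry of the kernel $k_{\gamma}(x)=\frac{1}{C_\gamma}|x|^{-(d-\gamma)}$. I would dispose of part (ii) first, since it is purely a change of variables. Writing
$$K_{\gamma}(x,y)-K_{\gamma}(x',y)=k_{\gamma}(x-y)-k_{\gamma}(x'-y)$$
and substituting $u=y-x'$, the radial symmetry $k_{\gamma}(-u)=k_{\gamma}(u)$ turns the integrand into $|k_{\gamma}((x-x')-u)-k_{\gamma}(u)|^p$, whose integral over $u$ is exactly $\|K_{\gamma}(x-x',\cdot)\|_{L^p(\mathbb{R}^d)}^p$. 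This gives (ii) directly.

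For part (i) the essential task is to check that $K_{\gamma}(x,\cdot)\in L^p(\mathbb{R}^d)$ for $x\neq 0$, and I would estimate $\int_{\mathbb{R}^d}|k_{\gamma}(x-y)-k_{\gamma}(y)|^p\,dy$ by splitting $\mathbb{R}^d$ into small balls around the two singularities $y=0$ and $y=x$, a region near infinity, and the remaining compact region where the integrand is continuous and hence bounded. Near $y=0$ the term $k_{\gamma}(y)\sim|y|^{-(d-\gamma)}$ dominates, and $\int_{|y|<\varepsilon}|y|^{-(d-\gamma)p}\,dy<\infty$ precisely when $(d-\gamma)p<d$, that is, when $\gamma>d\left(1-\frac{1}{p}\right)$; the singularity at $y=x$ is handled identically. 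The delicate region is $|y|\to\infty$, and this is where subtracting $k_{\gamma}(y)$ is indispensable: each term alone decays only like $|y|^{-(d-\gamma)}$, which is never $L^p$ at infinity under the stated hypotheses, but their difference decays faster. Applying the mean value theorem to $k_{\gamma}$ between the points $-y$ and $x-y$ (which differ by $x$, recalling $k_{\gamma}(y)=k_{\gamma}(-y)$), and using that $|\nabla k_{\gamma}(\xi)|=O\left(|\xi|^{-(d-\gamma)-1}\right)$ with $|\xi|$ comparable to $|y|$ on the joining segment when $|y|\geq 2|x|$, gives $|K_{\gamma}(x,y)|\leq C\,|x|\,|y|^{-(d-\gamma)-1}$. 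Hence $|K_{\gamma}(x,y)|^p$ is integrable at infinity exactly when $\left((d-\gamma)+1\right)p>d$, i.e. when $\gamma<d\left(1-\frac{1}{p}\right)+1$. The two constraints together are precisely the hypothesis $d\left(1-\frac{1}{p}\right)<\gamma<d\left(1-\frac{1}{p}\right)+1$, so $\|K_{\gamma}(x,\cdot)\|_{L^p(\mathbb{R}^d)}<\infty$; I expect this integrability-at-infinity estimate, which hinges on the cancellation in the modified kernel, to be the main obstacle.

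Finally, to obtain the explicit form I would use two invariances. For a rotation $R$, the substitution $y\mapsto Ry$ together with the radial symmetry of $k_{\gamma}$ shows $\|K_{\gamma}(Rx,\cdot)\|_{L^p(\mathbb{R}^d)}=\|K_{\gamma}(x,\cdot)\|_{L^p(\mathbb{R}^d)}$, so the norm depends only on $|x|$. For the dependence on $|x|$, the substitution $y=\lambda z$ with $\lambda>0$, combined with the homogeneity $k_{\gamma}(\lambda w)=\lambda^{-(d-\gamma)}k_{\gamma}(w)$, yields
$$\|K_{\gamma}(\lambda x,\cdot)\|_{L^p(\mathbb{R}^d)}^p=\lambda^{d-(d-\gamma)p}\,\|K_{\gamma}(x,\cdot)\|_{L^p(\mathbb{R}^d)}^p,$$
whence $\|K_{\gamma}(\lambda x,\cdot)\|_{L^p(\mathbb{R}^d)}=\lambda^{\gamma-\left(1-\frac{1}{p}\right)d}\,\|K_{\gamma}(x,\cdot)\|_{L^p(\mathbb{R}^d)}$. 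Taking any unit vector $e$ and setting $C_{p\,\gamma\,d}=\|K_{\gamma}(e,\cdot)\|_{L^p(\mathbb{R}^d)}$ (finite by the previous step, and independent of the choice of $e$ by rotational invariance), the choice $\lambda=|x|$ and $x=|x|e$ gives $\|K_{\gamma}(x,\cdot)\|_{L^p(\mathbb{R}^d)}=C_{p\,\gamma\,d}\,|x|^{\gamma-\left(1-\frac{1}{p}\right)d}$, which is exactly (i).
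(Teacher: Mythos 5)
Your proof is correct and follows essentially the same route as the paper's (sketched) argument: split the integral into the region near the singularities (where $\gamma>d(1-\tfrac{1}{p})$ gives integrability) and the region $|y|\gtrsim|x|$ (where the cancellation bound $|K_{\gamma}(x,y)|\leq C|x|\,|x-y|^{\gamma-d-1}$ and $\gamma<d(1-\tfrac{1}{p})+1$ give decay), then deduce (i) from scaling and rotation invariance and (ii) from a translation change of variables. Your write-up merely fills in details the paper leaves implicit, such as the mean value theorem justification of the decay estimate and the explicit homogeneity computation.
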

\begin{proof} (Sketch) Since
$$\left\|{K_{\gamma}(x,\,.\,)}\right\|_{L^p(\mathbb{R}^d)} ^p =\int\limits_{\{|y|<2|x|\}} |K_{\gamma} (x,y)|^p dy +\int\limits_{\{|y|\geq 2|x|\}} |K_{\gamma} (x,y)|^p dy\,.$$
The  condition $d\left({1-\dfrac{1}{p}}\right)<\gamma $ gives the appropriate exponent for the boundedness of the first integral.
In addition, since $\gamma<d\left({1-\dfrac{1}{p}}\right)+1$ and considering that for some positive constant $C$
$$|K_{\gamma} (x,y)| \leq C |x-y|^{\gamma-d-1} |x|\,,$$
if $|y|>2|x|$, then the second integral is also finite. Hence, the map $x \mapsto \left\|{K_{\gamma}(x,\,.\,)}\right\|_{L^p(\mathbb{R}^d)}$ is well defined and by a change of variable, we obtain that it is an homogeneous function depending only on $|x|$, from which assertion (i) follows. Assertion (ii) is also obtained by a change of variable.
\end{proof}
For fixed $x\in\mathbb{R}^d$, we note that in the Fourier domain $\mathcal{K}_{\gamma}$ can be characterized, in an appropriate sense, \cite{Cohen},  Chapter 3, p. 45, by:
\begin{equation}\label{intfrac}
\mathcal{K}_{\gamma}f(x)=\frac{1}{(2\pi)^\gamma} \int\limits_{\mathbb{R}^d} \left({\frac{e^{-2\pi i\lambda x} -1}{ \left| \lambda \right|^{  \gamma} }}\right)\widehat{f}(\lambda) d\lambda\,.
\end{equation}
Some formal manipulations show that from equations \eqref{fourfrac} and \eqref{intfrac}, for suitable parameters $\beta$ and $\gamma$, we have:
\begin{equation}\label{combfrac}
\widehat{(\mathcal{I}_{\gamma} K_{\beta}(x,\,.\,))}(\lambda)=\widehat{K_{\beta+\gamma} (x,\,.\,)}(\lambda)=\frac{1}{(2\pi)^{\gamma+\beta}} \left({ \frac{e^{-2\pi i\lambda x} -1}{ \left| \lambda \right|^{  \beta} }}\right)\frac{1}{|\lambda|^{\gamma}}.
\end{equation}
and
\begin{equation}\label{combfrac2}
\mathcal{K}_{\gamma}(\mathcal{I}_{\beta}f)(x)=\mathcal{K}_{\beta+\gamma} f (x)=\int\limits_{\mathbb{R}^d} K_{\beta+\gamma} (x,y)f(y) dy\,.
\end{equation}
For $s\in\mathbb{R}$ another related operator $\mathcal{J}_s f$ is defined, formally, by its Fourier transform as:
\begin{equation}\label{Bessel}
\widehat{\mathcal{J}_s f} (\lambda)= (1+|\lambda|^2)^{s/2}\widehat{f}(\lambda) \,.
\end{equation}
\begin{sat}\label{t3bis} \cite{Gra2}, Chapter 6, p. 8. If $s<0$ and $p\geq 1$, $\mathcal{J}_s :L^p (\mathbb{R}^d)\longrightarrow L^p (\mathbb{R}^d)$ defines a continuous linear operator, i.e. there exists $C_p>0$ such that
$$\left\| {\mathcal{J}_s f} \right\|_{L^p (\mathbb{R}^d) }
\leq C_{p} \left\| f \right\|_{L^p (\mathbb{R}^d) } \,.$$
\end{sat}
\noindent
For $1<p<\infty$, and $s\in\mathbb{R}$, we introduce the \emph{Sobolev spaces} $H^p _s (\mathbb{R}^d)$:
 $$H^p _s (\mathbb{R}^d)=\left\{{f\in\mathcal{S}'(\mathbb{R}^d):\;\;\mathcal{J}_s f\in \,L^p (\mathbb{R}^d)}\right\}\,.$$
 These are Banach spaces of tempered distributions with the norm defined by  $\left\|{f}\right\|_{H^p _s (\mathbb{R}^d)}=\left\|{J_s f}\right\|_{L^p (\mathbb{R}^d)}$. Moreover, \cite{Mey1992}, p.168, if $s\geq 0$, this norm is equivalent to $\left\|{f}\right\|_{L^p(\mathbb{R}^d)} +\left\|{(|\,.\,|^s \widehat{f})^\vee}\right\|_{L^p  (\mathbb{R}^d)}$. Recalling again equation \eqref{intfrac} the equivalence of norms for $K_{\gamma}(x,\,.\,)$ takes the following form which will be useful in the sequel:
 \begin{equation}\label{normadeK}
  \left\|{K_{\gamma}(x,\,.\,)}\right\|_{H^p _s (\mathbb{R}^d)}\sim \left\|{K_{\gamma}(x,\,.\,)}\right\|_{L^p(\mathbb{R}^d)} +\left\|{(K_{\gamma-s}(x,\,.\,)}\right\|_{L^p  (\mathbb{R}^d)}\,.
 \end{equation}
 In the particular case $s=-d$, only when $p=2$, the $H^p _s (\mathbb{R}^d)$ spaces coincide with the following $\mathcal{F}{L^p}_w$ spaces, which are introduced for auxiliary purposes.
\begin{pro}\label{Fp} For $1 \leq p \leq 2$, the space
$$\mathcal{F}{L^p}_w=\left\{{f\in\mathcal{S}'(\mathbb{R}^d):\;\;\widehat{f}(1+|\,.\,|^2)^{-d}\in\,L^p (\mathbb{R}^d)}\right\}$$
is a Banach space with the norm defined by $\left\|{f}\right\|_{\mathcal{F}{L^p}_w}=\left\|{\widehat{f}(1+|\,.\,|^2)^{-d}}\right\|_{L^p (\mathbb{R}^d)}$. Moreover convergence in $\mathcal{F}{L^p}_w$ implies convergence in $\mathcal{S}'(\mathbb{R}^d)$.
\end{pro}
\begin{proof}
Observe that if we define $w(\lambda)=(1+|\lambda|^2)^{-d}$, then $f\in\mathcal{F}{L^p}_w$ if and only if $\widehat{f}\in\;L^p(\mathbb{R}^d, w \, d\lambda)$.  Let $(f_n)_{n\in\mathbb{N}}$ be a Cauchy sequence en $\mathcal{F}{L^p}_w$ which is equivalent to $(\widehat{f}_n)_{n\in\mathbb{N}}$
being a Cauchy sequence in $L^p(\mathbb{R}^d, w \, d\lambda)$, and then there exists a unique $g\in L^p(\mathbb{R}^d, wd\lambda)$ such that
$\left\|{\widehat{f}_n-g}\right\|_{L^p (\mathbb{R}^d, w d\lambda)}\longrightarrow 0$, when $n\longrightarrow\infty$. We shall verify that $g\in\mathcal{S}'(\mathbb{R}^d)$ and therefore taking $f:=g^{\vee} \in \mathcal{S}'(\mathbb{R}^d)$ we are done.
For this 
take $\dfrac{1}{p} +\dfrac{1}{q}=1$ and $m>d\left(1+2\dfrac{q}{p}\right)$ then by H\"{o}lder's inequality:

$$\int\limits_{\mathbb{R}^d} \frac{|g(\lambda)|}{(1+|\lambda|)^m} d\lambda
=\int\limits_{\mathbb{R}^d} \frac{|g(\lambda)|}{(1+|\lambda|)^m} \frac{(1+|\lambda|^2)^{\frac{d}{p}}}{(1+|\lambda|^2)^{\frac{d}{p}}} d\lambda  $$
$$\leq  \left({ \int\limits_{\mathbb{R}^d} {|g(\lambda)|^p}{(1+|\lambda|^2)^{-d}}  d\lambda}\right)^{\frac{1}{p}}\left({ \int\limits_{\mathbb{R}^d} \frac{(1+|\lambda|^2)^{\frac{dq}{p}}}{(1+|\lambda|)^{m q}}  d\lambda}\right)^{\frac{1}{q}} <\infty\,,$$
thus, see e.g. \cite{Gra}, Exercise 2.3.1, p.122, $g\in \mathcal{S}'(\mathbb{R}^d)$ and therefore $f\in\mathcal{F}{L^p}_w$. Finally,  $\mathop{f_n \longrightarrow f}\limits_{n\longrightarrow\infty}$ in $\mathcal{F}{L^p}_w$ if and only if $\mathop{\widehat{f}_n \longrightarrow \widehat{f}}\limits_{n\longrightarrow\infty}$ in $L^p (\mathbb{R}^d, w \, d\lambda)$. Let $\varphi\in \mathcal{S}(\mathbb{R}^d)$, then, if $\dfrac{1}{p}+\dfrac{1}{q}=1$, by definition of the Fourier Transform of a tempered distribution and H\"{o}lder's inequality we get:
$$|\langle{f_n,\varphi}\rangle-\langle{f,\varphi}\rangle|=|\langle{\widehat{f}_n-\widehat{f},\varphi^{\vee}}\rangle|=\left|{\int\limits_{\mathbb{R}^d}(\widehat{f}_n (\lambda)-\widehat{f} (\lambda))\varphi^{\vee}(\lambda) d\lambda}	\right|$$
$$=\left|{\int\limits_{\mathbb{R}^d}(\widehat{f}_n (\lambda)-\widehat{f} (\lambda))\varphi^{\vee}(\lambda)\frac{(1+|\lambda|^2)^{\frac{d}{p}}}{(1+|\lambda|^2)^{\frac{d}{p}}} d\lambda}	\right|$$
$$\leq \left({\int\limits_{\mathbb{R}^d}|\widehat{f}_n (\lambda)-\widehat{f} (\lambda)|^p \frac{1}{(1+|\lambda|^2)^d} d\lambda} 	\right)^{\frac{1}{p}}  \left({\int\limits_{\mathbb{R}^d}|\varphi^{\vee}(\lambda)|^q (1+|\lambda|^2)^{\frac{dq}{p}} d\lambda}	\right)^{\frac{1}{q}}\,,$$
which proves the last assertion of Proposition \ref{Fp}.
\end{proof}
The following estimate for the $ \mathcal{F}{L^p}_w$ norm will be useful in the sequel.
\begin{lem}\label{L1}
Let $1\leq p \leq 2$, then  $L^2(\mathbb{R}^d)\subset \mathcal{F}{L^p}_w$ and moreover, if $Q=\left[{-\dfrac{1}{4},\dfrac{1}{4}}\right)^d$, there exits a positive constant $C_{p\,d}$ such that for every  $f\in L^2(\mathbb{R}^d)$, $f=0$ a.e. in $Q^c$, the following inequality holds:
\begin{equation}\label{ineq1}
\left\|{f}\right\|_{\mathcal{F}{L^p}_w} ^p \leq C_{p\,d} \sum\limits_{k\in\mathbb{Z}^d} |\widehat{f}(k)|^p (1+|k|^2)^{-d}\,.
\end{equation}
\end{lem}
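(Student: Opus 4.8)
The plan is to deduce both assertions from the sampling representation of $\widehat f$ furnished by Theorem \ref{t-1}, after which estimate \eqref{ineq1} reduces to a discrete weighted convolution inequality on $\Z^d$.

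First I would dispose of the inclusion $L^2(\R^d)\subset\mathcal{F}L^p_w$, which is immediate from H\"older's inequality and Plancherel: writing the norm as the weighted integral $\int_{\R^d}|\widehat f(\lambda)|^p(1+|\lambda|^2)^{-d}\,d\lambda$ (as in the proof of Proposition \ref{Fp}) and splitting with exponents $\frac{2}{p}$ and $\frac{2}{2-p}$, one bounds it by $\|\widehat f\|_{L^2}^{p}\big(\int_{\R^d}(1+|\lambda|^2)^{-\frac{2d}{2-p}}\,d\lambda\big)^{\frac{2-p}{2}}$; the last integral converges for every $p\in[1,2)$, and the case $p=2$ is trivial since the weight is bounded. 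So the substance is \eqref{ineq1}.

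Since $f$ vanishes a.e.\ off $Q=[-\frac14,\frac14)^d$ and $\frac14<\frac12$, Theorem \ref{t-1} applies and yields $\phi\in\mathcal S(\R^d)$ with $\widehat f(\lambda)=\sum_{k\in\Z^d}\widehat f(k)\,\phi(\lambda-k)$. Fixing the decay at the outset, rapid decrease of $\phi$ gives $|\phi(\lambda)|\le C_N(1+|\lambda|)^{-N}$ for any $N$, whence $|\widehat f(\lambda)|\le C_N\sum_k|\widehat f(k)|(1+|\lambda-k|)^{-N}$. I would then decompose $\R^d=\bigcup_{m\in\Z^d}Q_m$ into unit cubes $Q_m=m+[-\frac12,\frac12)^d$; on each $Q_m$ the weight $(1+|\lambda|^2)^{-d}$ is comparable to $(1+|m|^2)^{-d}$ and $(1+|\lambda-k|)^{-N}$ to $(1+|m-k|)^{-N}$, with constants depending only on $d,N$. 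Integrating term by term reduces \eqref{ineq1} to the discrete estimate
\[
\sum_{m\in\Z^d}v(m)\Big(\sum_{k\in\Z^d}b_k\,(1+|m-k|)^{-N}\Big)^p\le C\sum_{k\in\Z^d}v(k)\,b_k^{\,p},
\]
where $b_k=|\widehat f(k)|$ and $v(k)=(1+|k|^2)^{-d}$.

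This convolution inequality is the crux. Writing $h(j)=(1+|j|)^{-N}$, so that the inner sum is $(b*h)(m)=\sum_k b_k\,h(m-k)$, I note that for $p>1$ the power cannot be pushed inside termwise; instead I would invoke Peetre's inequality $v(m)\le 2^d\,v(k)\,(1+|m-k|^2)^d$ to transfer the slowly varying weight onto the kernel. With $\tilde b_k=v(k)^{1/p}b_k$ and $\tilde h(j)=2^{d/p}(1+|j|^2)^{d/p}(1+|j|)^{-N}$ one obtains the pointwise domination $v(m)^{1/p}(b*h)(m)\le(\tilde b*\tilde h)(m)$, and Young's inequality $\ell^1*\ell^p\to\ell^p$ then gives $\|v^{1/p}(b*h)\|_{\ell^p}\le\|\tilde h\|_{\ell^1}\|\tilde b\|_{\ell^p}$; since $\|\tilde b\|_{\ell^p}^p=\sum_k v(k)b_k^p$, this is exactly the desired bound. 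The only thing to check is $\tilde h\in\ell^1(\Z^d)$, i.e.\ that $\tilde h(j)\sim(1+|j|)^{2d/p-N}$ is summable, which holds once $N>3d$ — and since $\phi$ is Schwartz we are free to choose $N$ that large. The main obstacle is therefore not analytic depth but the bookkeeping of matching the two polynomial weights: Peetre's inequality is precisely what converts the weight into an extra polynomial factor that the rapidly decaying kernel absorbs, after which Young closes the argument.
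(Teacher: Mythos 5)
Your proposal is correct, and it rests on the same two pillars as the paper's own proof: the sampling representation $\widehat f(\lambda)=\sum_k\widehat f(k)\phi(\lambda-k)$ from Theorem \ref{t-1}, and Peetre's inequality $(1+|\lambda|^2)^{-d}\le 2^d(1+|k|^2)^{-d}(1+|\lambda-k|^2)^{d}$ to shift the weight onto the lattice points, with the leftover polynomial factor absorbed by the rapid decay of $\phi$. (Your treatment of the inclusion $L^2(\mathbb{R}^d)\subset\mathcal{F}L^p_w$ via H\"older and Plancherel is also exactly the paper's.) Where you genuinely diverge is in how the resulting convolution-type sum is closed. The paper never discretizes the frequency variable: it splits $|\phi(\lambda-k)|=|\phi(\lambda-k)|^{1/q}|\phi(\lambda-k)|^{1/p}$, applies discrete H\"older's inequality pointwise in $\lambda$ using the uniform bound $\sum_k|\phi(\lambda-k)|\le C$, and then integrates term by term, the constant emerging explicitly as $2^dC\int(1+|\lambda|^2)^d|\phi(\lambda)|\,d\lambda$. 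You instead tile $\mathbb{R}^d$ by unit cubes, use comparability of the weights on each cube to reduce \eqref{ineq1} to a weighted convolution inequality on $\mathbb{Z}^d$, and close it with Young's inequality $\ell^1*\ell^p\to\ell^p$ after transferring the weight via Peetre (your summability check $N>3d$, uniform over $p\in[1,2]$, is right). The two routes are morally equivalent — Young's inequality is itself proved by the very H\"older splitting the paper performs — but yours modularizes the final step into a named lemma at the cost of an extra discretization and its cube-comparability constants, while the paper's continuous argument is self-contained and keeps the constant explicit. Both deliver a constant depending only on $p$ and $d$, as the statement requires.
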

\begin{proof} If $p=2$ the result is immediate. To prove the first assertion for $p\neq 2$, by H\"{o}lder's inequality one has the following estimate
$$\left\|{f}\right\|_{\mathcal{F}{L^p}_w} ^p\leq \left\|{f}\right\|_{L^2 (\mathbb{R}^2)} ^p \left({\int\limits_{\mathbb{R}^d} \frac{d\lambda}{(1+|\lambda|^2)^{d/(1-\frac{p}{2})}}}\right)^{1-\frac{p}{2}}\,.$$
For the second assertion, under these conditions we can write $$\widehat{f}(\lambda)=\sum\limits_{k\in\mathbb{Z}^d} \widehat{f}(k) \phi(\lambda-k),$$ as in Theorem \ref{t-1} and therefore:
\begin{equation*}
\left\|{f}\right\|_{\mathcal{F}{L^p}_w}=\int\limits_{\mathbb{R}^d} |\widehat{f}(\lambda)|^p (1+|\lambda|^2)^{-d} d\lambda
\end{equation*}
\begin{equation*}
\leq \int\limits_{\mathbb{R}^d} \left({\sum\limits_{k\in\mathbb{Z}^d} |\widehat{f}(k)| |\phi(\lambda-k)| (1+|\lambda|^2)^{-d/p}}\right)^p d \lambda
\end{equation*}
\begin{equation}\label{e1}
\leq \int\limits_{\mathbb{R}^d} \left({\sum\limits_{k\in\mathbb{Z}^d} |\widehat{f}(k)| |\phi(\lambda-k)| 2^{d/p} (1+|k|^2)^{-d/p}(1+|\lambda-k|^2)^{d/p}}\right)^p d \lambda\,
\end{equation}
since $(1+|\lambda|^2)^{-d} \leq 2^{d} (1+|k|^2)^{-d}(1+|\lambda-k|^2)^{d} $ by Peetre's inequality.
If $\dfrac{1}{p}+\dfrac{1}{q}=1$, take $a_k(\lambda)=|\phi(\lambda-k)|^{\frac{1}{q}}$ and
$$b_k(\lambda)=|\widehat{f}(k)| |\phi(\lambda-k)| 2^{\frac{d}{p}} (1+|k|^2)^{-\frac{d}{p}}(1+|\lambda-k|^2)^{\frac{d}{p}}|\phi(\lambda-k)|^{\frac{1}{p}},$$
by H\"{o}lder's inequality we get:
\begin{equation}\label{e2}
\left\|{f}\right\|_{\mathcal{F}{L^p}_w}\leq \int\limits_{\mathbb{R}^d} \sum\limits_{k\in\mathbb{Z}^d} |b_k(\lambda)|^p \left({\sum\limits_{k\in\mathbb{Z}^d} |a_k(\lambda)|^q}\right)^{\frac{p}{q}}  d \lambda\,,
\end{equation}
finally, since there exists  some positive constant $C$ such that:
\begin{equation*}
\sum\limits_{k\in\mathbb{Z}^d} |a_k(\lambda)|^q = \sum\limits_{k\in\mathbb{Z}^d} |\phi(\lambda-k)|\leq C,
\end{equation*}
then equation \eqref{e2} becomes
$$\leq 2^d C \int\limits_{\mathbb{R}^d} \sum\limits_{k\in\mathbb{Z}^d} |\widehat{f}(k)|^p (1+|k|^2)^{-d}(1+|\lambda-k|^2)^{d} |\phi(\lambda-k)| d\lambda$$
$$=2^d C \int\limits_{\mathbb{R}^d} (1+|\lambda|^2)^{d} |\phi(\lambda)| d\lambda \sum\limits_{k\in\mathbb{Z}^d} |\widehat{f}(k)|^p (1+|k|^2)^{-d}\,.$$
\end{proof}

\subsection{Some probability, stable laws and generalized random processes.}\label{secsta}
Let $(\Omega,\mathcal{F},\mathbf{P})$ be a probability space and $X$ a random variable variable defined on it. The \emph{distribution function of} $X$ is defined, for $x\in\mathbb{R}$, as $F_X (x)=\mathbf{P}(X\leq x)$. If $\varphi$ is any Borel measurable real function, we will denote the expectation of $\varphi(X)$ with $\mathbf{E}(\varphi(X))$. The characteristic function of $X$ is
$\Phi_X (\xi)=\mathbf{E}(e^{i\xi X})$. For $p\in(0,2]$, we say that a random variable $\eta$ is symmetric $p$-stable of parameter $\sigma>0$ if $\Phi_{\eta}(\xi)=e^{-\sigma^{p}|\xi|^p}$. A symmetric $p$-stable random variable $\eta$ will be denoted as $\eta\sim SpS$. When we write $F_{\eta_p}$ we shall be referring to the distribution function of such a random variable with $\sigma=1$. Note that $p=2$ corresponds to the Gaussian case and therefore $\eta\sim\mathcal{N}(0,\sigma)$. 
Let us review some basic properties of stable distributions, see \cite{Sam}, Chapter 1, p. 10, and \cite{Kwo92}, Chapter 0, p.5.
\begin{enumerate}\label{stable}
 \item
 If $\eta_1,\dots, \eta_n$ are independent and $\eta_i \sim SpS$, with parameter $\sigma_i$ then $\sum\limits_{i=1}^n \eta_i \sim SpS$, with $\sigma' = \left\|(\sigma_{\eta_i})_i \right\|_{l^{p}}$.\hfill\\
\item Let $p < 2$. If $\eta\sim SpS$  and $0<r<p$ then $(\mathbf{E}|\eta|^r)^{1/r} = \, C_r \, \sigma_\eta$, where ${C_{r}}^r = \mathbf{E} |\eta_p|^{r}$, and $\mathbf{E}|\eta|^r =\infty$ for $r\geq p$. 
\end{enumerate}
Let $\mu$ be a non negative Borel measure on $\mathbb{R}^d$. We shall need a result on the a.s. convergence of random elements in
$L^r (\mathbb{R}^d,d\mu)$. This theorem is a particular case of a more general one in \cite{Kwo92}, Chapter 2.
\begin{sat}\label{Stoconv}
Let $0<r<p<2$, $\{f_j\}_{j\in\mathbb{N}} \subset L^r (\mathbb{R}^d,d\mu)$, and let $\{\eta_j\}_{j\in\mathbb{N}}\sim SpS$ be a sequence  of independent and identically distributed random variables. Then the series $\sum\limits_{i=1}^{\infty} \eta_i f_i$ converges in $L^r (\mathbb{R}^d,d\mu)$ a.s. if and only if
$$\left\|{\left({\sum\limits_{i=1}^{\infty}|f_i|^{p} }\right)^{1/p}}\right\|_{ L^r (\mathbb{R}^d,d\mu)} <\infty\,.
$$
\end{sat}

Our results, are aimed at the construction of certain random variables taking values in $\mathcal{D}' (\mathbb{R}^d)$. In this case, every $\mathcal{D}' (\mathbb{R}^d)$- valued random variable, say $X$, takes the form of a random linear functional defined on $\mathcal{D} (\mathbb{R} ^d)$. Previously, we will also need to define the class of \textit{generalized random  processes}, of which these $\mathcal{D}' (\mathbb{R}^d)$- valued random variables are particular cases. Following \cite{Gel}, Chapter 3, p. 237,  and \cite{Unser}, Chapter 4, p. 57,  we will say that a generalized random functional is  defined on $\mathcal{D}(\mathbb{R}^d)$ if for every $\varphi\,\in\,\mathcal{D} (\mathbb{R}^d)$ there is associated a real valued random variable $X(\varphi)=\langle{X,\varphi}\rangle$.
 In accordance with the usual specification of the probability distributions of a countable set of real random variables, given $n\in\mathbb{N}$, $\varphi_1, \dots, \varphi_n\,\in\, \mathcal{D}(\mathbb{R}^d)$ define the probability of the events,
$\{ a_k \leq \langle{X,\varphi _k}\rangle < b_k \},\,\,\;k=1,\dots, n\;,$ which will have to be compatible in the usual sense.
 On the other hand, linearity means that for any $a,b \in \mathbb{R}$, $\varphi,\,\psi\in \mathcal{D} (\mathbb{R}^d)$:
 $\langle{X,a \varphi + b \psi}\rangle= a \langle{X,\varphi}\rangle + b \langle{X,\psi}\rangle \;\;\; \textrm{a.s.}$.
 For a comprehensive study on this topic, see \cite{Gel}.
 In an analogous way to real valued random variables, for each $\varphi\in \mathcal{D} (\mathbb{R}^d)$ we can calculate the characteristic function of the real random variable $\langle{X,\varphi}\rangle $, $\Phi_{\langle{X,\varphi}\rangle} (\xi)=\mathbf{E}(e^{i\xi \langle{X,\varphi}\rangle})$. In fact if $\xi=1$ and considering $\varphi$ as a variable, this gives the \emph{characteristic functional} of $X$, $\Phi_X (\varphi)=\mathbf{E}(e^{i\langle{X,\varphi}\rangle})$,
which completely determines its distributions as in the  case of ordinary random processes. Finally, \emph{self-similarity} for generalized random processes can be defined in the following analogous way to \cite{Unser}, p. 178: $X$ is self-similar if there exists  a constant $\delta>0$  such that
\begin{equation} \label{self}
\Phi_X (\varphi)=\Phi_X (a^{\delta}\varphi(a\,.\,))\,,
\end{equation}
 for every dilation factor $a>0$ and $\varphi \in  \mathcal{D} (\mathbb{R}^d)$. This means that $X$ is equivalent, in probability law, to $a^r X(\,.\,/a)$, for some appropriate constant $r$. In this context, we recall the Hausdorff dimension, see \cite{Fal}, Chapter 2, p. 21, of a subset $A$ of $\mathbb{R}^d$ denoted by $dim_H(A)$. Although self similarity is associated to the notion of ``fractality'', the last one has not a precise meaning. However, subsets of $\mathbb{R}^d$  with non integer Hausdorff dimension    are considered as displaying a fractal behaviour. A  way for the study of the fractal behaviour of the graph of a function is the calculation of  its Hausdorff dimension. Usually, the estimation of a lower bound for this value is calculated by potential methods, see \cite{Fal}, Chapter 2, p. 26, and \cite{Kah}, Chapter 10, p.132.  An example is:
\begin{lem}\label{Frost} If $B$ is a compact subset of $\mathbb{R}^d$ and $\mathcal{G}\subset\mathbb{R}^{d+1}$ denotes the graph of a measurable  function $f:B\longrightarrow\mathbb{R}$ and
$\int\limits_B \int\limits_B (|x-x'|^2 +|f(x)-f(x')|^2)^{-\rho/2} \, dx \, dx' <\infty$ then $dim_H (\mathcal{G})>\rho$.
\end{lem}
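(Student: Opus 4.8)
The plan is to prove this by the classical potential-theoretic (Frostman) lower bound for Hausdorff dimension, exactly as in the cited sources \cite{Fal}, Chapter 2, p.~26 and \cite{Kah}, Chapter 10, p.~132: if a Borel set $A\subset\mathbb{R}^n$ carries a positive finite Borel measure $\mu$ whose $\rho$-energy
$$I_{\rho}(\mu)=\int\int \frac{d\mu(z)\,d\mu(w)}{|z-w|^{\rho}}$$
is finite, then $\mathcal{H}^{\rho}(A)=\infty$ and hence $dim_H(A)\geq\rho$. The entire task is therefore to manufacture, out of the hypothesis, a measure on the graph $\mathcal{G}\subset\mathbb{R}^{d+1}$ whose $\rho$-energy is precisely the integral assumed finite.

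First I would introduce the graph map $\Phi:B\to\mathbb{R}^{d+1}$, $\Phi(x)=(x,f(x))$, which is Borel measurable since $f$ is, and injective with image exactly $\mathcal{G}$. I would then push the Lebesgue measure $\lambda=\mathcal{L}^d|_B$ (finite, since $B$ is compact) forward under $\Phi$ to obtain a Borel measure $\mu=\Phi_{\#}\lambda$ supported on $\mathcal{G}$, with total mass $\mu(\mathcal{G})=\mathcal{L}^d(B)$. Here one implicitly assumes $\mathcal{L}^d(B)>0$, so that $\mu$ is a genuine (nonzero) mass distribution; this is harmless in the intended application, where $B$ is a cube.

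The key step is the change of variables in the energy integral. Since the Euclidean distance in $\mathbb{R}^{d+1}$ between two points of the graph is
$$|\Phi(x)-\Phi(x')|=\left(|x-x'|^2+|f(x)-f(x')|^2\right)^{1/2},$$
the defining property of the pushforward (applied to the product measure $\mu\times\mu=(\Phi\times\Phi)_{\#}(\lambda\times\lambda)$) gives
$$I_{\rho}(\mu)=\int_{B}\int_{B}\frac{dx\,dx'}{\left(|x-x'|^2+|f(x)-f(x')|^2\right)^{\rho/2}},$$
which is exactly the quantity assumed finite in the hypothesis. Hence $I_{\rho}(\mu)<\infty$, and the Frostman bound immediately yields $dim_H(\mathcal{G})\geq\rho$.

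The delicate point, and the only genuine obstacle, is the \emph{strict} inequality. Finiteness of $I_{\rho}(\mu)$ controls the integrand only near the diagonal and so, by itself, produces nothing better than $dim_H(\mathcal{G})\geq\rho$; indeed there exist sets with $C_{\rho}(\mathcal{G})>0$ yet $dim_H=\rho$, so strictness cannot be extracted from the $\rho$-energy alone. I would close this gap in the way the potential method is actually used in \cite{Fal} and \cite{Kah}, via $dim_H(\mathcal{G})=\sup\{\alpha:C_{\alpha}(\mathcal{G})>0\}$: in the regimes where this lemma is invoked the displayed double integral is finite not only for $\rho$ but for a whole range of exponents up to some $\rho_0>\rho$, so the same computation furnishes $I_{\rho'}(\mu)<\infty$ for an exponent $\rho'\in(\rho,\rho_0]$ and therefore $dim_H(\mathcal{G})\geq\rho'>\rho$. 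Recording the bound for the single exponent $\rho$ is then merely a convenient packaging of this strict lower estimate.
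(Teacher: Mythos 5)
Your proposal is correct and coincides with the approach the paper itself relies on: the paper gives no proof of this lemma at all, presenting it as a standard consequence of the potential-theoretic method with pointers to \cite{Fal} and \cite{Kah}, and your argument (push the Lebesgue measure on $B$ forward to the graph via $x\mapsto(x,f(x))$, identify the $\rho$-energy of the pushforward with the hypothesis integral, and invoke the Frostman energy criterion) is exactly the argument those citations stand for. Your caveat about strictness is also well taken: the energy criterion yields only $dim_H(\mathcal{G})\geq\rho$, so the ``$>$'' in the statement is an overclaim that your method cannot (and need not) repair, and it is harmless in context, since in the lemma's sole application the energy integral is shown finite for every $\rho<\frac{3d}{2}-\gamma+1$, and taking the supremum over such $\rho$ gives the paper's stated conclusion $\frac{3d}{2}-\gamma+1\leq dim_H(\mathcal{G})$ whether the lemma is read with ``$>$'' or ``$\geq$''.
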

Other related results will be introduced in the final section, for the estimation of the Hausdorff dimension of certain processes arising from the construction introduced in equation \eqref{ssf}.
\subsection{Wavelets.}
Let $\{\psi_{j\,k} ^e\}_{j\in\mathbb{Z}\,k\in\mathbb{Z}^d\,e\in E}$, with $E=\{0,1\}^d$, be an orthonormal wavelet basis of $L^2(\mathbb{R^d})$, \cite{Mey1992}, Chapter 2. The Parseval identity for this case is:
\begin{equation}\label{pars}
\left\|f\right\|^2 _{L^2 (\mathbb{R}^2)}=\sum\limits_{e \in E}\sum\limits_{j\in\mathbb{Z}}\sum\limits_{k\in\mathbb{Z}^d} |\langle{f,\psi_{j\,k} ^e}\rangle|^2\,.
\end{equation}
Therefore the norm  $\left\|f\right\|^2 _{L^2 (\mathbb{R}^2)}$ can be estimated from the wavelet coefficients $\langle{f,\psi_{j\,k} ^e}\rangle$. Under some additional conditions, for example if the wavelet basis arises from a \emph{$r$-regular wavelet multirresolution approximation} of $L^2(\mathbb{R}^d)$, then, if $\{I_{j\,k}\}_{j\in \mathbb{Z},k\in\mathbb{Z}^d}$ denotes the family of dyadic cubes of $\mathbb{R}^d$, for some positive constants $c_p,c_{p\,s}, C_p, C_{p\,s}$,  we have the following estimations for the $L^p(\mathbb{R}^d)$ and $H^p _s(\mathbb{R}^d)$ norms respectively, \cite{Mey1992}, Chapter 6:
\begin{equation}\label{Lpnorm}
c_p \left\|{f}\right\|_{L^p (\mathbb{R}^d)}\leq \left\|{\left(\sum\limits_{j\,k\,e} |\langle{f,\psi_{j\,k}^e}\rangle|^2 2^{dj}\mathbf{1}_{I_{j\,k}}\right)^{\frac{1}{2}}}\right\|_{L^p (\mathbb{R}^d)}\leq C_p \left\|{f}\right\|_{L^p  (\mathbb{R}^d)}\,,
\end{equation}
and for $0\leq s\leq r$,
\begin{equation}\label{Hpnorm}
c_{p\,s} \left\|{f}\right\|_{H^p _s (\mathbb{R}^d)}\leq \left\|{\left(\sum\limits_{j\,k\,e} |\langle{f,\psi_{j\,k}^e}\rangle|^2 (1+4^{sj})2^{dj}\mathbf{1}_{I_{j\,k}}\right)^{\frac{1}{2}}}\right\|_{L^p (\mathbb{R}^d)}\leq C_{p\,s} \left\|{f}\right\|_{H^p _s (\mathbb{R}^d)}.
\end{equation}
In order to simplify the notation involving wavelet expansions we will sometimes omit the summation limits as in equations \eqref{Lpnorm} and \eqref{Hpnorm}.
\section{Main Results.}
\subsection{Convergence.}
First, we prove an inequality involving the $l^p$ norm of the wavelet coefficients of a function. As a byproduct, this inequality implies one case of the Sobolev's embeddings, see e.g. \cite{Adams}, Theorem 7.57.
\begin{sat}\label{T1}
Let $\{\psi_{j\,k}^e \}_{j\,k\,e}$ be an $r$-regular orthonormal wavelet basis,  $1 < p < 2$ and $ d\left({\dfrac{1}{p}-\dfrac{1}{2}}\right)<s<r$ then there exists a positive constant $C_{p\,s}$ such that:
\begin{equation}\label{e3}
\left\|f\right\|_{L^2(\mathbb{R}^d)} \leq \left({\sum\limits_{j\,k\,\,e}|\langle{f,\psi_{j\,k} ^e}\rangle|^{p}  }\right)^{\frac{1}{p}} \leq C_{p\,s} \left\|f\right\|_{H^p _s (\mathbb{R}^d)}\,,
\end{equation}
for all $f\in  H^p _s (\mathbb{R}^d)$. If $p=2$, the inequality \eqref{e3} holds for $s\geq 0$.
\end{sat}
\begin{proof} The case $p=2$ is immediate since
$ \left\|f\right\|_{L^2(\mathbb{R}^d)} \leq  \left\|f\right\|_{H^2 _s (\mathbb{R}^d)}$.
If $1 < p\leq 2$, the lower bound holds, since
$$\sum\limits_{j\,k\,e} |\langle{f,\psi_{j\,k} ^e}\rangle|^{p} \geq \left({\sum\limits_{j\,k\,e} |\langle{f,\psi_{j\,k} ^e}\rangle|^{2} }\right)^{\frac{p}{2}}= \left\|f\right\|_{L^2(\mathbb{R}^d)} ^p\,.$$
The upper bound is obtained splitting the sum:
$$\sum\limits_{j\,k,\,e} |\langle{f,\psi_{j\,k} ^e}\rangle|^{p} =\sum\limits_{j< 1\,k\,e} |\langle{f,\psi_{j\,k} ^e}\rangle|^{p} +\sum\limits_{j \geq 1\,k\,e} |\langle{f,\psi_{j\,k} ^e}\rangle|^{p}\,.$$
Then for each $e\in E$:
$$\sum\limits_{j\geq1\,k} |\langle{f,\psi_{j\,k} ^e}\rangle|^{p} =\int\limits_{\mathbb{R}^d}\sum\limits_{j\geq 1} 2^{jd\left({1-\frac{p}{2}}\right)} 4^{-js\frac{p}{2}} 4^{js\frac{p}{2}}\left({\sum\limits_{k\in\mathbb{Z}^d} |\langle{f,\psi_{j\,k} ^e}\rangle|^{2}\mathbf{1}_{I_{j\,k}}(x) 2^{jd} }\right)^{\frac{p}{2}} dx\,,$$
since for fixed $j$, $I_{j\,k}\bigcap I_{j\,k'}=\emptyset$ if $k\neq k'$. The inner integrand can be rewritten as
$$\sum\limits_{j\geq 1} 2^{jd\left({(1-\frac{p}{2})-\frac{sp}{d}}\right) }4^{js\frac{p}{2}} \left({\sum\limits_{k\in\mathbb{Z}^d} |\langle{f,\psi_{j\,k} ^e}\rangle|^{2}\mathbf{1}_{I_{j\,k}} 2^{jd} }\right)^{\frac{p}{2}}$$
$$\leq\left({\sum\limits_{j\geq 1} \left({2^{jd\left({(1-\frac{p}{2})-\frac{sp}{d}}\right)}}\right)^{\frac{2}{2-p}}}\right)^{\frac{2-p}{2}} \left({\sum\limits_{j\geq 1} 4^{js} \sum\limits_{k\in\mathbb{Z}^d} |\langle{f,\psi_{j\,k} ^e}\rangle|^{2}\mathbf{1}_{I_{j\,k}} 2^{jd} }\right)^{\frac{p}{2}}\,,$$
by H\"{o}lder's inequality with exponents $\dfrac{2}{p}$ and $\dfrac{2}{2-p}$ and since $s> d\left({\dfrac{1}{p}-\dfrac{1}{2}}\right)$. Hence
 \begin{equation}\label{e4}\sum\limits_{j\geq 1\,k} |\langle{f,\psi_{j\,k} ^e}\rangle|^{p}\leq C_{d\,p\,s} \int\limits_{\mathbb{R}^d} \left({\sum\limits_{j\geq 1} 4^{js} \sum\limits_{k\in\mathbb{Z}^d} |\langle{f,\psi_{j\,k} ^e}\rangle|^{2}\mathbf{1}_{I_{j\,k}} (x) 2^{jd} }\right)^{\frac{p}{2}} dx
 \end{equation}
$$\sum\limits_{j\geq1\,k} |\langle{f,\psi_{j\,k} ^e}\rangle|^{p}\leq C'_{d\,p\,s}\left\|f\right\|_{H^p _s (\mathbb{R}^d)} ^p\,.$$
For the bound on the  other term, we proceed similarly to the previous case:
$$\sum\limits_{j<1\,k} |\langle{f,\psi_{j\,k} ^e}\rangle|^{p}=\int\limits_{\mathbb{R}^d}\sum\limits_{j< 1} 2^{jd\left({1-\frac{p}{2}}\right)} \left({\sum\limits_{k\in\mathbb{Z}^d} |\langle{f,\psi_{j\,k} ^e}\rangle|^{2}\mathbf{1}_{I_{j\,k}}(x) 2^{jd} }\right)^{\frac{p}{2}} dx\,.$$
Therefore by by H\"{o}lder's inequality with exponents $\dfrac{2}{p}$ and $\dfrac{2}{2-p}$, if $$C''_{d\,p}=\left({\sum\limits_{j< 1} 2^{jd\left({1-\frac{p}{2}}\right)\frac{2}{2-p}}} \right)^{\frac{2-p}{p}}\,,$$ we get
$$\sum\limits_{j<1\,k} |\langle{f,\psi_{j\,k} ^e}\rangle|^{p}\leq C''_{d\,p} \int\limits_{\mathbb{R}^d} \left({\sum\limits_{j< 1}\sum\limits_{k\in\mathbb{Z}^d} |\langle{f,\psi_{j\,k} ^e}\rangle|^{2}\mathbf{1}_{I_{j\,k}}(x) 2^{jd} }\right)^{\frac{p}{2}} dx$$
\begin{equation}\label{e5}
\leq C''_{d\,p} \left\|f\right\|_{L^p (\mathbb{R}^d)}\leq C''_{d\,p} \left\|f\right\|_{H^p _s (\mathbb{R}^d)}
\end{equation}
Combining equations \eqref{e4} and \eqref{e5} and since $E$ is finite we get the result.
\end{proof}
Now, we can prove one of the main results of this work.
\begin{sat}\label{T2} Let $\{\psi_{j\,k}^e \}_{j\,k\,e}$ be an $r$-regular orthonormal wavelet series, with $ d\left({\dfrac{1}{p}-\dfrac{1}{2}}\right)<\gamma \leq d\left({1-\dfrac{1}{p}}\right)$, $\dfrac{3}{4}\leq p\leq 2$, $\gamma<r$ and $(\eta_{j\,k\,e})_{j\,k\,e}$ a sequence of independent identically distributed random variables such that $\eta_{j\,k\,e} \sim SpS$. Then the series defined by
$$X_{\gamma}=\sum\limits_{j\,k\,e} \eta_{j\,k\,e}  \mathcal{I}_{\gamma} \psi_{j\,k} ^e$$
converges a.s.  in $\mathcal{D}'(\mathbb{R}^d)$. If $p=2$, the result remains true for $0\leq\gamma \leq \dfrac{d}{2}$.
\end{sat}

\begin{proof} We shall prove the case $p<2$, the $p=2$ case is very similar using Parseval's identity instead of Theorem \ref{T1}. Let $Q=\left[{\dfrac{-1}{4},\dfrac{1}{4}}\right)^d$,
since $( \mathcal{I}_{\gamma} \psi_{j\,k} ^e)\mathbf{1}_Q\,\in\,L^2 (\mathbb{R}^d)$, then by lemma \ref{L1},
$$
\left\|{( \mathcal{I}_{\gamma} \psi_{j\,k} ^e)\mathbf{1}_Q}\right\|_{\mathcal{F}{L^p}_w}\leq C_{p\,d} \sum\limits_{n\in\mathbb{Z}^d} |\widehat{( \mathcal{I}_{\gamma} \psi_{j\,k} ^e)\mathbf{1}_Q}(n)|^p (1+|n|^2)^{-d}\,,
$$
thus
$$
\sum\limits_{j \,k\,e}\left\|{( \mathcal{I}_{\gamma} \psi_{j\,k} ^e)\mathbf{1}_Q}\right\|_{\mathcal{F}{L^p}_w} ^p\leq C_{p\,d} \sum\limits_{n\in\mathbb{Z}^d} \sum\limits_{j \,k\,e} |\widehat{( \mathcal{I}_{\gamma} \psi_{j\,k} ^e)\mathbf{1}_Q}(n)|^p (1+|n|^2)^{-d}\,
$$
\begin{equation}\label{e6}=C_{p\,d} \sum\limits_{n\in\mathbb{Z}^d} (1+|n|^2)^{-d}\sum\limits_{j \,k\,e} |\widehat{( \mathcal{I}_{\gamma} \psi_{j\,k} ^e)\mathbf{1}_Q}(n)|^p \,.
\end{equation}
But, if $e_n (x)= \mathbf{1}_Q (x) e^{i2\pi n x}$,  a density argument applied to equation \eqref{selfadj} gives:
$$\widehat{( \mathcal{I}_{\gamma} \psi_{j\,k} ^e)\mathbf{1}_Q}(n)=\langle{( \mathcal{I}_{\gamma} \psi_{j\,k} ^e)\mathbf{1}_Q, e_n}\rangle=\langle{ \psi_{j\,k} ^e, \mathcal{I}_{\gamma} e_n}\rangle=\overline{\langle{  \mathcal{I}_{\gamma} e_n,\psi_{j\,k} ^e}\rangle}\,.
$$
Therefore, by Theorem \ref{T1}, and taking $\gamma=s$ :
\begin{equation}\label{e7}
\sum\limits_{j \,k\,e} |\widehat{( \mathcal{I}_{\gamma} \psi_{j\,k} ^e)\mathbf{1}_Q}(n)|^p=\sum\limits_{j \,k\,e} |\langle{  \mathcal{I}_{\gamma} e_n,\psi_{j\,k} ^e}\rangle|^p \leq C_{p\,s}\left\| \mathcal{I}_{\gamma} e_n\right\|_{H^p _s (\mathbb{R}^d)}
\end{equation}
\begin{equation}\label{e8}
\leq C'_{p\, s}(\left\| \mathcal{I}_{\gamma} e_n\right\|_{L^p  (\mathbb{R}^d)}+\left\|I_{\gamma-s} e_n\right\|_{L^p  (\mathbb{R}^d)}) \leq C'_{p\,\gamma} (\left\|e_n\right\|_{L^r  (\mathbb{R}^d)}+\left\| e_n\right\|_{L^p  (\mathbb{R}^d)})\,.
\end{equation}
The last inequality holds by the Hardy-Littlewood and Sobolev Inequality with exponents $\dfrac{1}{r}-\dfrac{1}{p}=\dfrac{\gamma}{d}$.
Note that the validity of this last step is granted since $\dfrac{4}{3}\leq p\leq 2$ and $ d\left({\dfrac{1}{p}-\dfrac{1}{2}}\right)\leq\gamma \leq d\left({1-\dfrac{1}{p}}\right)$. Moreover $\left\|e_n\right\|_{L^r  (\mathbb{R}^d)}+\left\| e_n\right\|_{L^p  (\mathbb{R}^d)}$ is finite and constant in $n$. Thus from the definition of $\mathcal{F}{L^p}_w$ combined with equations \eqref{e8}, \eqref{e7} and \eqref{e6}:
\begin{equation}\label{e9}
\int\limits_{\mathbb{R}^d}\sum\limits_{j \,k\,e}\left|{\widehat{( \mathcal{I}_{\gamma} \psi_{j\,k} ^e)\mathbf{1}_Q} (\lambda)}\right|^p (1+|\lambda|^2)^{-d} d\lambda
\end{equation}
$$
=\sum\limits_{j \,k\,e}\left\|{( \mathcal{I}_{\gamma} \psi_{j\,k} ^e)\mathbf{1}_Q}\right\|_{\mathcal{F}{L^p}_w} ^p \leq C_{p\,d}\sum\limits_{n\in\mathbb{Z}^d} (1+|n|^2)^{-d}\sum\limits_{j \,k\,e} |\widehat{( \mathcal{I}_{\gamma} \psi_{j\,k} ^e)\mathbf{1}_Q}(n)|^p <\infty\,.
$$
Taking any $1<r<p$, by H\"{o}lder's inequality combined with equation \eqref{e9}:
$$
\int\limits_{\mathbb{R}^d}\left({\sum\limits_{j \,k\,e}\left|{\widehat{( \mathcal{I}_{\gamma} \psi_{j\,k} ^e)\mathbf{1}_Q} (\lambda)}\right|^{p}}\right)^{\frac{r}{p}} (1+|\lambda|^2)^{-d} d\lambda
$$
$$
\leq\left({\int\limits_{\mathbb{R}^d}\sum\limits_{j \,k\,e}\left|{\widehat{( \mathcal{I}_{\gamma} \psi_{j\,k} ^e)\mathbf{1}_Q} (\lambda)}\right|^p (1+|\lambda|^2)^{-d} d\lambda}\right)^{\frac{r}{p}} \left({\int\limits_{\mathbb{R}^d} \frac{1}{(1+|\lambda|^2)^d} d\lambda }\right)^{1-\frac{r}{p}}<\infty\,
$$
then, by Theorem \ref{Stoconv},
$$
\sum\limits_{j \,k\,e}\eta_{j\,k\,e}\widehat{( \mathcal{I}_{\gamma} \psi_{j\,k} ^e)\mathbf{1}_Q}
$$
converges a.s. in $L^r(\mathbb{R}^d, w \, d\lambda)$ and therefore $\sum\limits_{j \,k\,e} \eta_{j\,k\,e}{( \mathcal{I}_{\gamma} \psi_{j\,k} ^e)\mathbf{1}_Q} $ converges a.s. in
$\mathcal{F}{L^r}_w$ and in $\mathcal{S}'(\mathbb{R}^d)$. With slight modifications, the same argument works with any translate of $Q$. Finally, to verify that $\sum\limits_{j \,k\,e} \eta_{j\,k\,e}{ \mathcal{I}_{\gamma} \psi_{j\,k} ^e} $ converges a.s. in $\mathcal{D}'(\mathbb{R}^d)$, take
$\mathcal{Q}=\left\{{Q=\left[{\dfrac{-1}{4},\dfrac{1}{4}}\right)^d +\dfrac{k}{2},\,k\in\mathbb{Z}^d}\right\}$
, $\Omega'$ with $\mathbf{P}(\Omega')=1$ defined by
$$
\Omega' = \mathop{\bigcap}\limits_{Q\in\mathcal{Q}} \left\{{\omega\in\Omega\,:\,\left\| {\sum\limits_{j \,k\,e} \eta_{j\,k\,e} (\omega){( \mathcal{I}_{\gamma} \psi_{j\,k} ^e)\mathbf{1}_Q}} \right\|_{\mathcal{F}_r}  < \infty}\right\}
$$
and $\varphi\in \mathcal{D}(\mathbb{R}^d)$. For fixed $Q\in\mathcal{Q}$, $\omega\in\Omega'$ and $N,M \in \mathbb{N}$
we have
$$
s_{N\,M\,Q}(\omega)=\sum\limits_{|j|\leq N \,|k|\leq M}\sum\limits_{e\in E} \eta_{j\,k\,e}(\omega){( \mathcal{I}_{\gamma} \psi_{j\,k} ^e)\mathbf{1}_Q} \in L^2(\mathbb{R}^d)\,,
$$
and then
$$
\left\langle{\sum\limits_{Q} s_{N\,M\,Q} (\omega),\varphi}\right\rangle=
\sum\limits_{i=1}^l \left\langle{ s_{N\,M\,Q_i}(\omega),\varphi}\right\rangle
$$
for some $Q_i$ such that $supp(\varphi) \subset \mathop{\bigcup}\limits_{i=1}^m Q_i$ since $\varphi$ has compact support. The result follows from the convergence of  $\langle{ s_{N\,M\,Q_i}(\omega),\varphi}\rangle$ when $N,M \longrightarrow\infty$ for each $i=1\dots m$.
\end{proof}
Alternatively, considering $\gamma>\dfrac{d}{2}$ and the operators $\mathcal{K}_{\gamma}$ instead of $\mathcal{I}_{\gamma}$ we can prove:
\begin{sat}\label{T2b} Let $\{\psi_{j\,k}^e \}_{j\,k\,e}$ be an $r$-regular orthonormal wavelet series, $ \dfrac{d}{2}<\gamma \leq d\left({1-\dfrac{1}{p}}\right)+1$, $1 \leq p\leq 2$, $\gamma<r$ and $(\eta_{j\,k\,e})_{j\,k\,e}$ a sequence of independent identically distributed random variables such that $\eta_{j\,k\,e} \sim SpS$. Then, for each $x\in\mathbb{R}^d$ the series defined by
$$Y_{\gamma}(x)=\sum\limits_{j\,k\,e} \eta_{j\,k\,e}  \mathcal{K}_{\gamma} \psi_{j\,k} ^e (x)$$
converges almost surely. Moreover, $\{Y_{ \gamma}(x)\}_{x\in\mathbb{R}^d}$ has a measurable version. If $p=2$, the result remains true for $\dfrac{d}{2}\leq\gamma \leq \dfrac{d}{2}+1$.
\end{sat}

\subsubsection*{Remark.} Note that the range of validity of the result depends on the dimension $d$, since the restrictions imply that $1<\dfrac{2d}{d+2}<p\leq 2$ for $d\geq 2$.
\begin{proof}
Recall the properties of the $p$ stable random variables reviewed in Section \ref{secsta}. For each $x\in\mathbb{R}^d$, we can prove the convergence in $r$-mean
($r<p$) of the sum defining $Y_{\gamma}(x)$. By Theorem \ref{T1}, and taking any $s$ such that $d\left({\dfrac{1}{p}-\dfrac{1}{2}}\right)<s<\gamma-d\left({1-\dfrac{1}{p}}\right)$, since $ \mathcal{K}_{\gamma} \psi_{j\,k} ^e (x)=\langle{K_{\gamma}(x,\,.\,),\psi_{j\,k} ^e}\rangle$ for some constant $C$. we obtain:
$$(\mathbf{E}|Y_{\gamma}(x)|^r)^{\frac{1}{r}}= C \left({\sum\limits_{j\,k\,e}|\langle{K_{\gamma}(x,\,.\,),\psi_{j\,k} ^e}\rangle|^{p}  }\right)^{\frac{1}{p}}\leq C' \left\|K_{\gamma}(x,\,.\,)\right\|_{H^p _s (\mathbb{R}^d)}<\infty\,,$$
since, recalling from Section \ref{auxfunc} the Lemma \ref{ker}, and the equivalence of norms of $H^p _s (\mathbb{R}^d)$ given by equation \eqref{normadeK}, one obtains: $$\left\|K_{\gamma}(x,\,.\,)\right\|_{H^p _s (\mathbb{R}^d)} \leq\,C(\left\|K_{\gamma-s}(x,\,.\,)\right\|_{L^p(\mathbb{R}^d)}+ \left\|K_{\gamma}(x,\,.\,)\right\|_{L^p(\mathbb{R}^d)}) $$
$$\leq C' ( |x|^{(\gamma-s) -\left({1-\frac{1}{p}}\right)d}+ |x|^{\gamma -\left({1-\frac{1}{p}}\right)d})\,.$$
The sum defining $Y_{\gamma} (x)$ converges a.s. since convergence in the $r$-mean of independent random variables implies a.s. convergence.
Similarly to the previous bound, if $|x-x'|<1$, by Lemma \ref{ker} (ii) one gets:
$$(\mathbf{E}|Y_{\gamma}(x)-Y_{\gamma}(x')|^r)^{\frac{1}{r}}$$
$$=C\left({\sum\limits_{j\,k\,e}|\langle{K_{\gamma}(x,\,.\,)-K_{\gamma}(x'),\psi_{j\,k} ^e}\rangle|^{p}  }\right)^{\frac{1}{p}}= C\left({\sum\limits_{j\,k\,e}|\langle{K_{\gamma}(x-x',\,.\,),\psi_{j\,k} ^e}\rangle|^{p}  }\right)^{\frac{1}{p}}$$
$$\leq C' |x-x'|^{(\gamma-s) -\left({1-\frac{1}{p}}\right)d}\,,$$
From this, applying Tchebychev's inequality,  it follows the stochastic continuity of $Y_{\gamma}(x)$, and then there exists a measurable version (Theorem 1, p.157 of \cite{Gikh}) of $\{Y_{\gamma}(x)\}_{x\in\mathbb{R}^d}$.
\end{proof}
\subsection{Self similarity analysis}
Self similarity in the sense of equation \eqref{self} is broken if $p\neq 2$. However, the following results show that, in some sense, the rescaled versions of $X_{\gamma}$ are stochastically dominated. Furthermore, we may expect some
kind of fractal behavior for an integrated version of $X_{\gamma}$, as the realizations of $Y_{\gamma}$ considering a Daubechies wavelet basis suggest, see Figures \ref{fig:f0} and \ref{fig:f1}.
\begin{figure}[h]
\centering
\subfigure[$p=2$]{
\includegraphics[width=6.5cm,height=6.5cm]{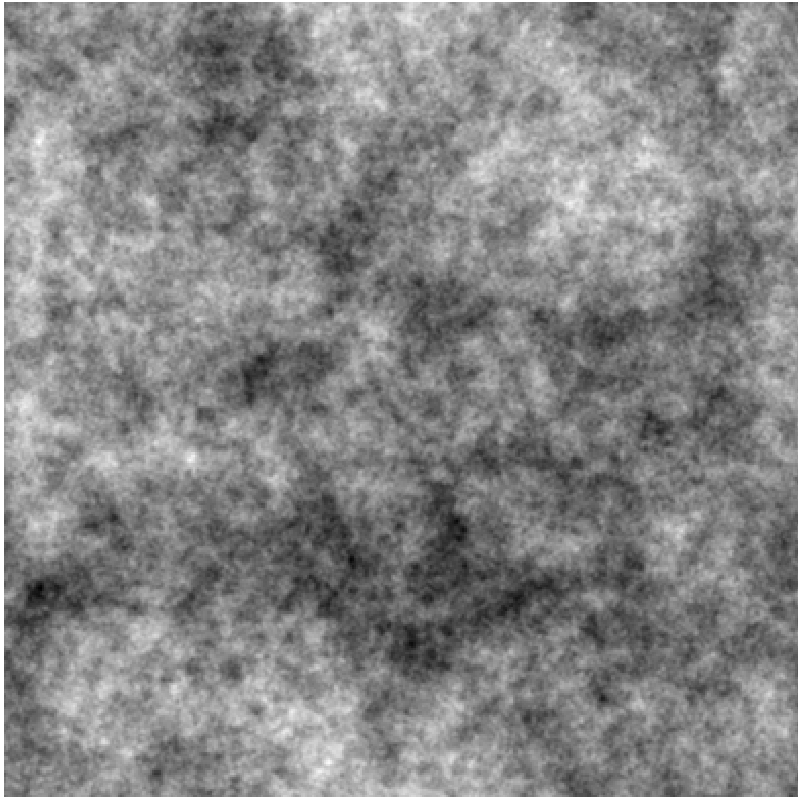} }
\subfigure[$p=1.8$]{
\includegraphics[width=6.5cm,height=6.5cm]{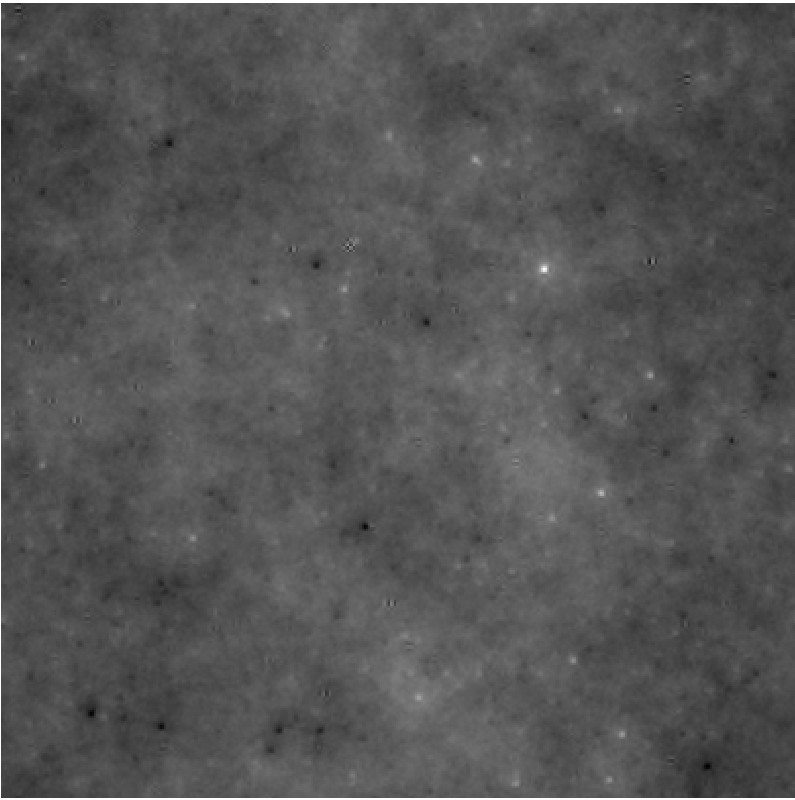} }
\caption{Observations of $Y_{\gamma}$, $d=2$ and $\gamma=1.1$.}
\label{fig:f0}
\end{figure}
\begin{figure}[h]
\centering
\subfigure[$p=2$]{
\includegraphics[width=6.5cm,height=6.5cm]{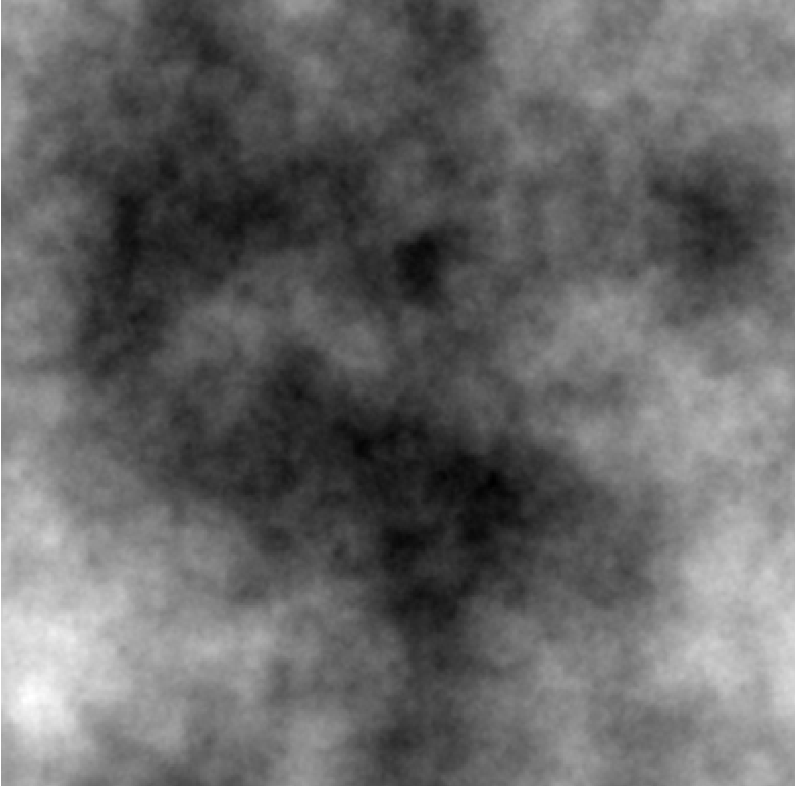} }
\subfigure[$p=1.8$]{
\includegraphics[width=6.5cm,height=6.5cm]{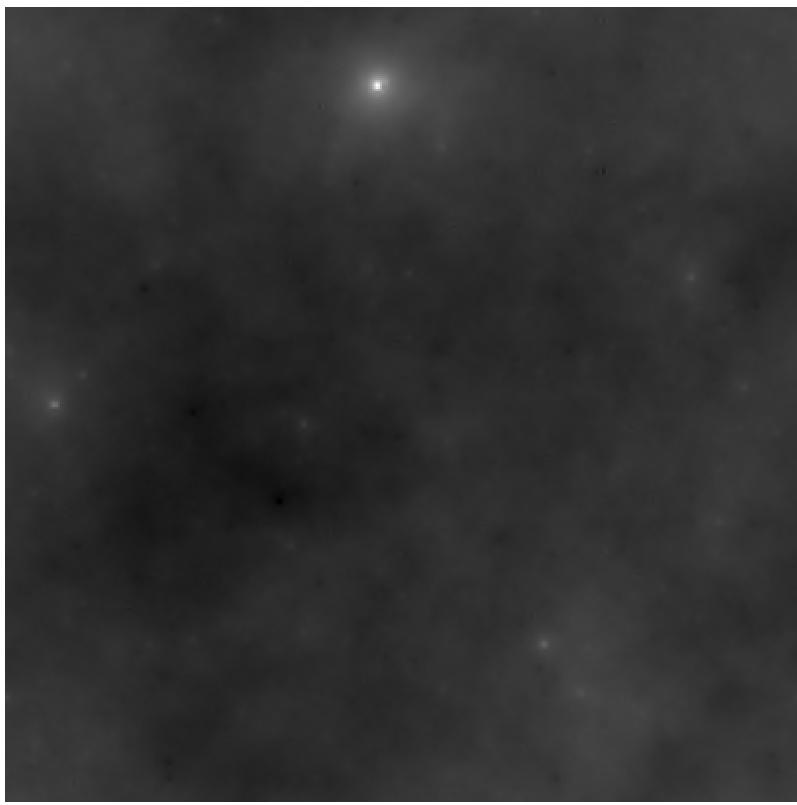} }
\caption{\label{im1}Observations of $Y_{\gamma}$, $d=2$ and $\gamma=1.6$.}
\label{fig:f1}
\end{figure}
\begin{sat}\label{T3}
Under the same hypothesis of Theorem \ref{T2}, the generalized random process $X_{\gamma}$ defined by:
\begin{equation}\label{e10}
X_{\gamma}=\sum\limits_{j \,k\,e} \eta_{j\,k\,e}{ \mathcal{I}_{\gamma} \psi_{j\,k} ^e}
\end{equation}
is self similar if $p=2$, in the sense that for every $\varphi\in\mathcal{D}(\mathbb{R}^d)$, $a^{\frac{d}{2}+\gamma}\langle{X_{\gamma},\varphi(a\,.\,)}\rangle$ has the same distribution function as $\langle{X_{\gamma},\varphi}\rangle$, and otherwise, for every $s>\gamma$, there exists a positive constant $C_{p\,s}$ such that the   following bounds hold:
\begin{equation}\label{e11}
 F_{\eta_p}(C_{p\,s} (a^{d\left({\frac{1}{2}-\frac{1}{p}}\right)}\left\| \mathcal{I}_{\gamma}\varphi\right\|_{L^p (\mathbb{R}^d)}+ a^{d\left({\frac{1}{2}-\frac{1}{p}}\right)+s}\left\|\mathcal{I}_{\gamma-s}\varphi\right\|_{L^p (\mathbb{R}^d)})^{-1} x)
\end{equation}
$$\leq \mathbf{P}(a^{\frac{d}{2}+\gamma} \langle{X_{\gamma},\varphi(a\,.\,)}\rangle\leq x) \leq F_{\eta_p}(\left\| \mathcal{I}_{\gamma}\varphi\right\|_{L^2 (\mathbb{R}^d)} ^{-1} x)\,,$$
for every $a>0$, $\varphi\in\mathcal{D}(\mathbb{R}^d)$ and $x>0$.

\end{sat}
\subsubsection*{Remark} Note that in the case $p=2$ it is easy to verify that the limit process is a Gaussian fractional noise with characteristic functional
$\phi_{X_{\gamma}}(\varphi)= e^{-\left\|\mathcal{I}_{\gamma}\varphi \right\|^2 _{L^2(\mathbb{R}^d)}}$, and that this stationary generalized random process has a \emph{spectral measure}, \cite{Gel}, Chapter 3, given by $d\mu_{X_{\gamma}} (\lambda)= (2\pi)^{-2\gamma}\dfrac{d\lambda}{|\lambda|^{2\gamma}}$. However, if $p\neq 2$, the analogous result for the stable case does not hold, since   $\phi_{X_{\gamma}}(\varphi)\neq e^{-\left\|\mathcal{I}_{\gamma}\varphi \right\|^p _{L^p(\mathbb{R}^d)}}$, which corresponds to the case of fractional stable noise.
\begin{proof}
Let $p<2$ and $\varphi\in\mathcal{D}(\mathbb{R}^d)$. To prove equation \eqref{e11} it is sufficient to analyze $\Phi_{X(\varphi(a\,.\,))}$, the characteristic function of the real random variable $a^{\frac{d}{2}+\gamma}\langle{X_{\gamma},\varphi(a\,.\,)}\rangle$. From the scaling property of $\mathcal{I}_{\gamma}$:
$$a^{\frac{d}{2}+\gamma} \langle{X_{\gamma},\varphi(a\,.\,)}\rangle=a^{\frac{d}{2}}\sum\limits_{j \,k\,e} \eta_{j\,k\,e}\langle{ \psi_{j\,k} ^e, (\mathcal{I}_{\gamma}\varphi)(a\,.\,)}\rangle\,.$$
Assume $\sigma=1$ with no loss of generality. Since the $\eta_{j\,k\,e}$'s are independent and identically distributed with characteristic function $\Phi_{\eta_{j\,k\,e}}(\xi)= e^{-|\xi|^p}$, then the sum defining $a^{\frac{d}{2}+\gamma}\langle{X_{\gamma},\varphi(a\,.\,)}\rangle$ has characteristic function given by:
\begin{equation}\label{e12}Ln\left({\Phi_{a^{\frac{d}{2}+\gamma}X(\varphi(a\,.\,))} (\xi)}\right)={-a^{\frac{dp}{2}}\left({\sum\limits_{j\,k\,e}|\langle{(\mathcal{I}_{\gamma}\varphi)(a\,.\,),\psi_{j\,k} ^e}\rangle|^{p}  }\right)|\xi|^p}\,
\end{equation}
which corresponds to the distribution $$F_{\eta_p}\left({a^{\frac{-d}{2}}\left({\sum\limits_{j\,k\,e}|\langle{(\mathcal{I}_{\gamma}\varphi)(a\,.\,),\psi_{j\,k} ^e}\rangle|^{p}  }\right)^{-1/p} x}\right)\,.$$
\noindent
Then, the upper bound follows combining Theorem \ref{T1} and the fact that $F_{\eta_p}$ is monotone. The lower bound is obtained similarly estimating the norm
$$\left\|(\mathcal{I}_{\gamma}\varphi)(a\,.\,)\right\|_{H^p _s (\mathbb{R}^d)}\,.$$ Finally, the case $p=2$ is obtained in an analogous way with equality due to Parseval's identity for the orthonormal basis $\{\psi_{j\,k} ^e\}_{j\,k\,e}$ of $L^2(\mathbb{R}^d)$.
\end{proof}
 The previous result is a consequence of the bound derived from Theorem \ref{T1}:
 \begin{equation}\label{e13}
 Ln\left({\Phi_{X(\varphi)} (\xi)}\right)=-\left({\sum\limits_{j\,k\,e}|\langle{(\mathcal{I}_{\gamma}\varphi),\psi_{j\,k} ^e}\rangle|^{p}  }\right)|\xi|^p\leq -\left\|\mathcal{I}_{\gamma}\varphi\right\|^p _{L^2(\mathbb{R}^d)}|\xi|^p\,.
 \end{equation}
 For $x\in\mathbb{R}^d$, and taking a sequence $\varphi_{n\,x} \in\mathcal{D}(\mathbb{R}^d)$ such that $\mathop{\varphi_ {n\,x} \longrightarrow K_{\beta}(x,\,.\,)}$ in $L^p (\mathbb{R}^d)$ as $n\longrightarrow\infty$, provided that $\gamma +\beta$ are as in Theorem \ref{T2b},  we can interpret $Y_{\gamma+\beta}$ as an integrated observation of $X_{\gamma}$: $Y_{\gamma+\beta}(x)=\langle{X_{\gamma},K_{\beta}(x,\,.\,)} \rangle=\int\limits_{\mathbb{R}^d} K_{\beta} (x,y) X_{\gamma}(y) dy$, where these equalities are only formal.
 In fact $Y_{\gamma+\beta}(x)$ is a well defined ordinary random variable for each $x\in\mathbb{R}^d$. Recalling equation \eqref{combfrac} and Section \ref{secsta}, its characteristic function is given by $$Ln(\Phi_{Y_{\gamma+\beta}(x)}(\xi))=-\left({\sum\limits_{j\,k\,e}|\langle{({K}_{\gamma+\beta}(x,\,.\,),\psi_{j\,k} ^e }\rangle|^{p}  }\right)|\xi|^p$$ which is the pointwise limit of the sequence of characteristic functions $$\{\Phi_{\langle{X_\gamma,\varphi_{n\,x}}\rangle}(\xi)\}_{n\in\mathbb{N}}\,.$$
This is a consequence of the following bound, which again can be derived from Theorem \ref{T1} with $s=\gamma$:
$$\left|{(-Ln(\Phi_{Y_{\gamma+\beta}(x)}(\xi)))^{1/p} -(-Ln(\Phi_{\langle{X_\gamma,\varphi_{n\,x})}\rangle}(\xi)))^{1/p}}\right|$$
$$\leq |\xi|\left({ \sum\limits_{j\,k\,e}|\langle{(\mathcal{I}_{\gamma}({K}_{\beta}(x,\,.\,)-\varphi_{n\,x})),\psi_{j\,k} ^e }\rangle|^{p}}\right)^{\frac{1}{p}}$$
$$\leq C_{p\,s}|\xi|(\left\| \mathcal{I}_{\gamma} ({K}_{\beta} (x,\,.\,)-\varphi_{n\,x})\right\|_{L^p  (\mathbb{R}^d)}+\left\|{{K}_{\beta} (x,\,.\,)-\varphi_{n\,x}}\right\|_{L^p  (\mathbb{R}^d)}) \,.$$

 The Lebesgue measure in $\mathbb{R}^{d+1}$ of a measurable version of $\{Y_{\gamma}(x)\}_{x\in\mathbb{R}^d}$ is zero. Let us bound, from below,   the Hausdorff dimension of the graph $\mathcal{G}\subset\mathbb{R}^{d+1}$ of $Y_{\gamma}(x)$.
As a consequence, we shall see that for suitable parameters, the Hausdorff dimension has non integer values.
 \begin{sat}
 Under the same hypothesis of Theorem \ref{T2b},  then $\dfrac{3d}{2}-\gamma+1 \leq dim_H(\mathcal{G})$ a.s., where  $\mathcal{G}\subset\mathbb{R}^{d+1}$ is the graph of $Y_{\gamma}(x)$.
 \end{sat}
 \begin{proof}


The lower bound is a consequence of Lemma \ref{Frost}. We shall prove that $$\mathbf{E}\int\limits_{B}\int\limits_{B} (|x-x'|^2+|Y(x)-Y(x')|^2)^{-\rho/2} \, dx \, dx' <\infty$$ if  $\rho<\dfrac{3d}{2}-\gamma+1$. Let us write $\Delta(x,x')=Y(x)-Y(x')$, then recalling equation \eqref{e13}, by Lemma \ref{ker}, (i) and (ii), one gets:
\begin{equation}\label{e14}
- Ln\left({\Phi_{\Delta} (\xi)}\right)=\left({\sum\limits_{j\,k\,e}|\langle{K_{\gamma}(x-x',\,.\,),\psi_{j\,k}^e }\rangle|^{p}  }\right)|\xi|^p
 \end{equation}
$$\geq \left\|K_{\gamma}(x-x',\,.\,) \right\|^p _{L^2(\mathbb{R}^d)}|\xi|^p= C \left({|x-x'|^{\gamma-\frac{d}{2}}}\right)^{p}|\xi|^p \,.$$
Hence, from equation \eqref{e14} :
$$\mathbf{E}((|x-x'|^2+|Y(x)-Y(x')|^2)^{-\rho/2})\leq \int\limits_{\mathbb{R}}\int\limits_{\mathbb{R}} \frac{1}{(|x-x'|^2+|u|^2)^{\rho/2}}{|\Phi_{\Delta}(\xi)|} d\xi du  $$
$$\leq \int\limits_{\mathbb{R}} \frac{1}{(|x-x'|^2+|u|^2)^{\rho/2}}du \int\limits_{\mathbb{R}}e^{-|\xi|^p |x-x'|^{p\gamma-\frac{pd}{2}}}  d\xi \leq \frac{C}{|x-x'|^{\rho-1+\gamma -\frac{d}{2}}}\,, $$
and therefore, if for example without loss of generality $B=\{|x|\leq 1\}$,
$$\mathbf{E}\int\limits_{B}\int\limits_{B} (|x-x'|^2+|Y(x)-Y(x')|^2)^{-\rho/2} \, dx \, dx' $$
$$\leq C \int\limits_{B}\int\limits_{B} \frac{1}{|x-x'|^{\rho-1+\gamma -\frac{d}{2}}} \, dx \, dx' <\infty$$
provided that $\rho<\dfrac{3d}{2}-\gamma+1$, which concludes the proof.
 \end{proof}
 \section*{Acknowledgment.}
The authors thank the collaboration of  Alexandre Chevallier, visiting student from the
\'Ecole Internationale des Sciences
du Tra\^{\i}tement de l'Information, \'Ecole d'Ing\'enieurs Math\'ematiques,
for the computer simulations
corresponding to Figures \ref{fig:f0} and \ref{fig:f1}.

\end{document}